\DeclareDocumentCommand \addpic{O{0.4\textwidth} m g}{\parpic[r]{%
\begin{minipage}{#1}
    \includegraphics[width=\textwidth]{#2}%
    \IfNoValueTF{#3}{}{\captionof{figure}{\footnotesize #3}}
\end{minipage}
}}
\newcommand{\arxiv}[1]{\href{http://arxiv.org/abs/#1}{\texttt{arXiv:#1}}}
\theoremstyle{plain}
\newtheorem{theorem}{Theorem}
\newtheorem{proposition}[theorem]{Proposition}
\theoremstyle{definition}
\theoremstyle{remark}
\newcommand{\area}{\operatorname{\texttt{area}}}
\newcommand{\dinv}{\operatorname{\texttt{dinv}}}
\font\ita=cmssi12
\title{\bf Dinv and Area}
\author{Adriano Garsia\thanks{Supported by NFS grant DMS13--62160.}\\
\small Department of Mathematics\\[-0.8ex]
\small UC San Diego\\[-0.8ex]
\small California, U.S.A.\\
\small\tt garsiaadriano@gmail.com\\
\and
Guoce Xin\thanks{Partially supported by National Natural Science Foundation of China (11171231).}\\
\small School of Mathematical Sciences\\[-0.8ex]
\small Capital Normal University\\[-0.8ex]
\small Beijing 100048, PR China\\
\small\tt guoce.xin@gmail.com
}
\date{\dateline{Sep 29, 2016}{***, 2017}\\
\small Mathematics Subject Classifications: 05A19, 05E99}
\begin{document}

\maketitle


\begin{abstract}
We give a new combinatorial proof of the
well known result that
the dinv of an $(m,n)$-Dyck path is equal to the area of its
 sweep map image. The first proof of this  remarkable identity for co-prime $(m,n)$ is due to Loehr and Warrington. There is also a second proof (in the co-prime case) due to Gorsky and Mazin and
 a third proof due to Mazin.

  \bigskip\noindent \textbf{Keywords:} Rational Dyck paths, sweep map, dinv
\end{abstract}

\section{Introduction}
Our main goal in this paper is to obtain a simpler  proof that, under the sweep map,  the {$\dinv$} statistic of a rational Dyck path $\overline D$ becomes the $\area$  statistic of its image path $D$. The first proof of this  remarkable identity is due to   Loehr and Warrington in \cite{ref4}. There is also a second proof due to Gorsky and Mazin in \cite{ref3}, and a third proof due to Mazin \cite{Mazin dinv sweep}. See Section 3 for further explanation.

Inspired by a  recent work of \cite{ref5}, we have come to depict rational Dyck paths
in  a manner which makes the ranks of the vertices of a path   consistent
with its visual representation. This very simple change turns out to be conducive to  considerable simplifications in  proving many of the properties of rational Dyck paths. For instance, we give a geometric proof of the invertibility of the rational Sweep Map in \cite{ref2}.

 We shall always use $(m,n)$ for a co-prime pair of positive integers, South end (by letter $S$) for the starting point of a North step and West end (by letter $W$) for the starting point of an East step. This is convenient and causes no confusion because we usually talk about the starting points of these steps.

\begin{figure}[!h]
  \begin{center}
     \includegraphics[width=11cm]{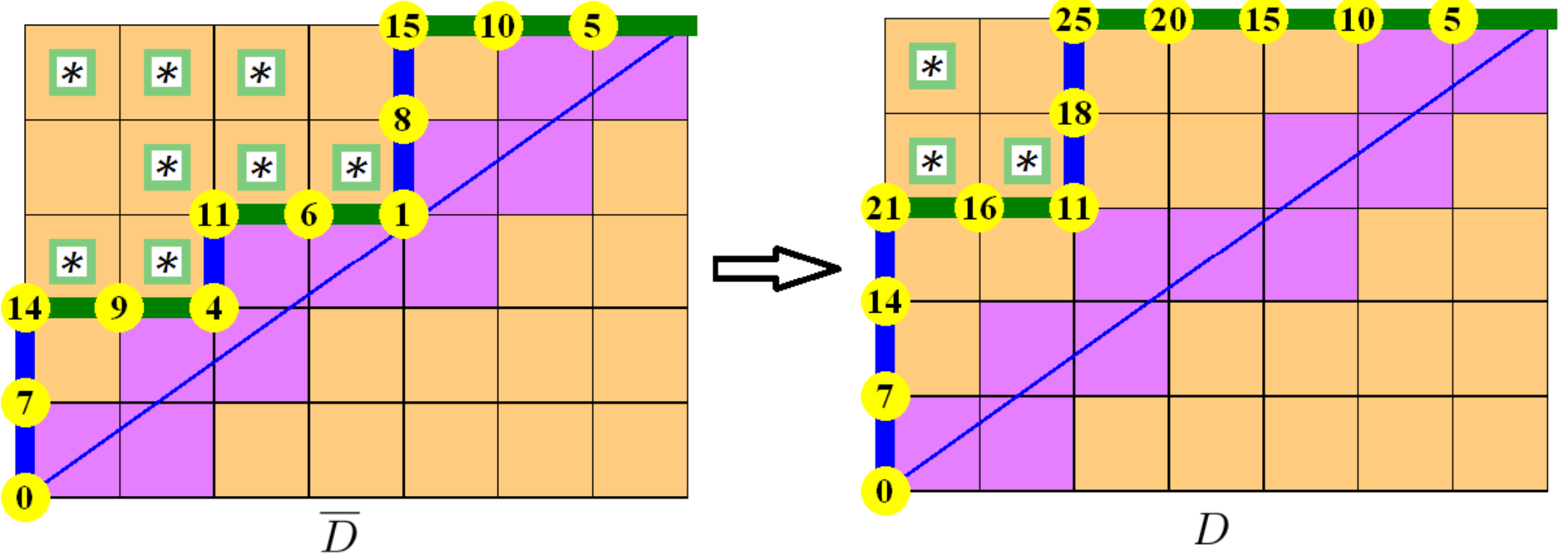}
  \end{center}
  \caption{\label{fig:GoodPair} A $(7,5)$-Dyck path and its sweep map image.}
\end{figure}

In Figure \ref{fig:GoodPair} we have depicted a path $\overline D $  in the $7\times 5$ lattice rectangle
and its sweep map image $D$ as they  are traditionally depicted. The ranks of the vertices of an $(m,n)$-path  are constructed by assigning $0$ to the origin $(0,0)$ and adding an $m$ after a North step and subtracting an $n$ after an East step.

To obtain the Sweep image $D$ of $\overline D$, we let the main diagonal (with slope $n/m$) sweep from right to left and successively draw the steps of $D$ as follows: i) draw a South end (and hence a  North step) when we sweep a South end of $\overline D $; ii) draw a West end (hence an  East step) when we sweep a West end of $\overline D$. The steps of $D$ can also be obtained by rearranging the steps of  $\overline D $ by increasing ranks of their starting  vertices.

For $(dm,dn)$-rational Dyck paths, we compute the ranks of the starting points in the same way, but we may have ties for the starting ranks. When this happens, we sweep the right starting point first. Geometrically, we may simply sweep the starting points of the steps of $\overline D$ from right to left using lines of slope $n/m+\epsilon$ for sufficiently small $\epsilon>0$, which will be written as $0<\epsilon \ll 1$.

The \texttt{area} of a rational Dyck path is equal to  the number of lattice cells between the path and   the main diagonal.
The \texttt{dinv} statistic we are using is the same as the $h^-_{m/n}$  statistic in \cite[Lemma~11]{ref1}:
A cell $c$ above a $(dm,dn)$-Dyck path $\overline D$ contributes a unit to its dinv statistic if and only if
the starting rank $a$ of the East step of $\overline D$ below $c$  and the starting rank $b$ of the North step of $\overline D$ to the right of   $c$ satisfy the inequality $1\le (b+m)-(a-n) \le m+n$, which is equivalent to
$ 0\le a-b<m+n$.
 In Figure \ref{fig:GoodPair}, the cells contributing  to
$\dinv(\overline D)$ are  distinguished by a green square\footnote{We also add a $*$ inside for black-white print.}. A quick count reveals that
$
\dinv(\overline D)=8=\area(D).
$

\begin{figure}[!h]
  \begin{center}
 \includegraphics[width=5cm]{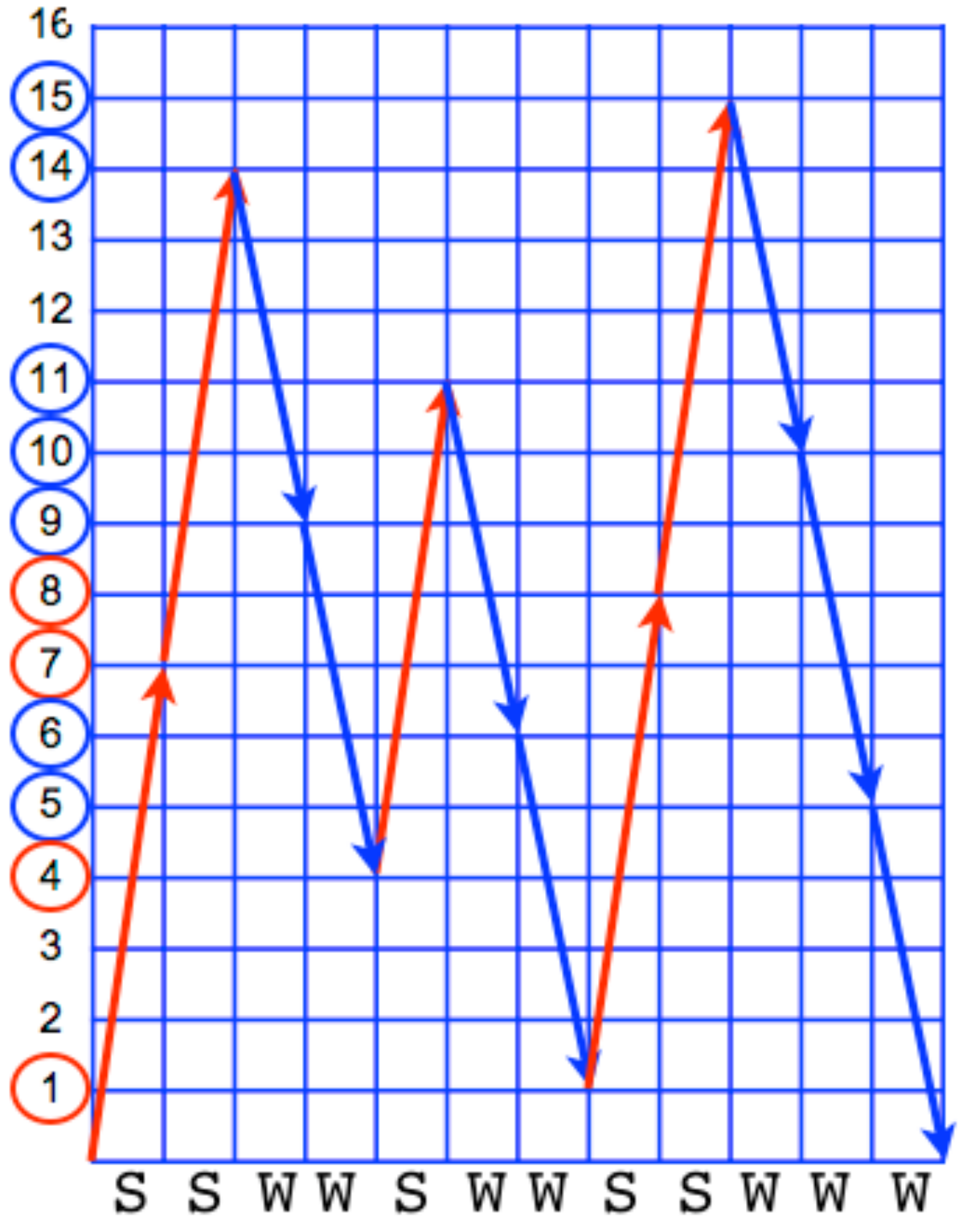}
  \end{center}
  \caption{\label{fig:Tableau} Illustration of the transformed $\overline{D}$.}
\end{figure}

To proceed  we need some notation.
A $(dm,dn)$ path diagram $T$ consists of a list of $dn$  red arrows and $dm$   blue arrows, placed on a $(dm+dn)\times dmn$ lattice rectangle.
A red arrow is the  up  vector $(1,m)$ and a blue arrow is the down vector $(1,-n)$\footnote{For black-white print, red arrows are up arrows and blue arrows are down arrows.}. The rows of lattice cells will be  referred to as {\ita rows} and  the
horizontal lattice lines  will be simply referred to as {\ita lines}. On the left of each line we have placed its $y$ coordinate which we will  refer to as its {\ita level}. The level of the starting point of an arrow is called its {\ita starting rank}, and similarly its {\ita end rank} is the level of its end point.
It will be convenient to call {\ita row $i$} the row of lattice cells delimited by the lines at levels $i$ and $i+1$.
Let $\Sigma$ be a list consisting of $dn$ letters $S$ and $dm$ letters $W$, and let $R=(r_1,\dots, r_{dm+dn})$ be a sequence of $dn+dm$ non-negative  ranks. The path diagram $T(\Sigma,R)$ (see  Figure \ref{fig:Tableau}) is obtained by placing the letters of $\Sigma$ at the bottom of the lattice columns and
if the $i^{th}$ letter of $\Sigma$ is an $S$ (resp., $W$) then we draw
a red (resp., blue) arrow with starting rank $r_i$ in the  $i^{th}$ column. Figure \ref{fig:Tableau} depicts our manner of drawing  the path $\overline D$.
The ranks  of $\overline D$ are now the circled levels of the starting vertices.

Now the sweep order is from bottom to top and from right to left within each level. Geometrically, the Sweep lines are of slope $\epsilon$ for $0<\epsilon \ll 1$. Note that the co-prime case (i.e., when $d=1$) simplifies since no two starting points have the same rank, and thus the sweep lines are just the level lines.

Notice that each lattice
cell may contain a segment of a red arrow or  a segment of a blue arrow or no segment at all. The red segment count of row $j$ will be denoted $c^r(j)$ and the blue segment count
is denoted $c^b(j)$. We will denote by $c(j)=c^r(j)-c^b(j)$ and refer to it the $j$-th row count. Observe that in every row of a path diagram, the red segments and blue segments have to alternate. In particular, for Dyck paths as in Figure \ref{fig:Tableau}, every row must start with a red segment and end with a blue segment, and hence $c(j)=0$ holds for all $j$. This is called the zero-row-count property. It has  the following immediate consequence.

\begin{proposition}\label{p-1.1}
The starting rank of  any  arrow $A$ of $D$ may be simply obtained by drawing in green (thick) the
line of slope $0<\epsilon\ll 1$ at the starting point of its preimage $\overline A$, then counting the  segments above the green line of any red arrow of $\overline D$ that starts below the  green  line and adding to that count the number of segments below the green line of any blue arrow of $\overline D$ that starts above the green line.
\end{proposition}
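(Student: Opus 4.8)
The plan is to reduce the assertion to the single identity $\rank(A)=pm-qn$, where $p$ (resp.\ $q$) denotes the number of red (resp.\ blue) arrows of $\overline D$ whose starting vertex lies strictly below the green line, and then to extract this identity from the zero-row-count property by a direct segment count. The reduction is immediate: since the green line has slope $\epsilon$ and passes through the starting vertex $P$ of $\overline A$, which sits at some level $y_P$, a starting vertex of $\overline D$ lies strictly below the green line exactly when it is swept strictly before $P$ --- this is precisely the role of the $\epsilon$-tilt. Hence the $p$ red arrows below the green line are carried by the sweep to the North steps of $D$ preceding $A$, and the $q$ blue arrows below the green line to the East steps of $D$ preceding $A$; by the rank rule ($0$ at the origin, $+m$ after a North step, $-n$ after an East step) the starting rank of $A$ in $D$ is therefore $pm-qn$.

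Next I would fix the bookkeeping near level $y_P$. Because the green line sits at level $y_P$ and is almost horizontal, a segment of $\overline D$ lies above it precisely when it occupies a row of index $\ge y_P$ and below it precisely when it occupies a row of index $\le y_P-1$; the tilt only serves to classify the starting vertices at level $y_P$ (those in columns to the right of $\overline A$ fall below the line, those to the left fall above, and $P$ itself lies on the line and is to be read as ``not below'', so that a West-end $\overline A$ is one of the blue arrows that ``start above the green line''). Two monotonicity facts follow at once: a red arrow whose starting vertex is not below the green line ascends from a level $\ge y_P$, so all of its segments lie above the line; and a blue arrow whose starting vertex lies below the green line descends from a level $\le y_P-1$, so all of its segments lie below the line. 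In particular every segment of $\overline A$ itself lies on one side of the line --- all above if $\overline A$ is a South end, all below if it is a West end.

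Now the double count. The $p$ red arrows carry $pm$ segments altogether; write $\rho_1$ for the number of them lying above the green line, which is the first part of the proposed count. By the first monotonicity fact the remaining $pm-\rho_1$ of these segments are exactly all the red segments that lie below the green line, so $pm=\rho_1+R_{<}$, where $R_{<}$ is the total number of red segments below the line. Dually, the $q$ blue arrows carry $qn$ segments, all of them below the line; together with the segments below the line contributed by the blue arrows whose start is not below the line --- a West-end $\overline A$ included --- this accounts for every blue segment below the line, and the number of those extra segments is precisely the second part of the proposed count, $\rho_2$. Hence $B_{<}=qn+\rho_2$, where $B_{<}$ is the total number of blue segments below the line. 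Finally the zero-row-count property gives $c^r(j)=c^b(j)$ for every row $j$, and summing these equalities over the rows $j\le y_P-1$ yields $R_{<}=B_{<}$. Combining, $pm-qn=(\rho_1+R_{<})-(B_{<}-\rho_2)=\rho_1+\rho_2$, and with the reduction of the first paragraph this is the proposition.

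The substance of the proof is the single line $R_{<}=B_{<}$ coming from the zero-row-count; the part that needs care --- and that I expect to be the only real obstacle --- is the bookkeeping of the second paragraph: deciding exactly which starting vertices and which segments fall on which side of the tilted green line, in particular pinning down the status of $\overline A$ and of the remaining arrows whose starting rank equals $y_P$, and then checking the two monotonicity facts together with the claim that they capture \emph{all} the relevant segments (no red segment below the line is overlooked and no blue segment below the line is counted twice). The degenerate positions of $\overline A$, such as $y_P=0$ or $y_P$ equal to the top rank, are handled routinely, the rows in question being empty.
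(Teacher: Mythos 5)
Your proof is correct and follows essentially the same route as the paper's: express the starting rank of $A$ as $pm-qn$ via the sweep order, interpret $pm$ and $qn$ as segment counts, and apply the zero-row-count property to the rows below the green line to turn this into exactly the two counts in the statement. The only slip is the intermediate claim that a blue arrow starting below the tilted line descends from a level $\le y_P-1$ (it may start at level $y_P$, to the right of $P$), but the conclusion you actually use---that all of its segments lie below the line---still holds, so the argument is unaffected.
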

\begin{proof} \begin{figure}[!ht]
\centering{
\mbox{\includegraphics[height=3.5 in]{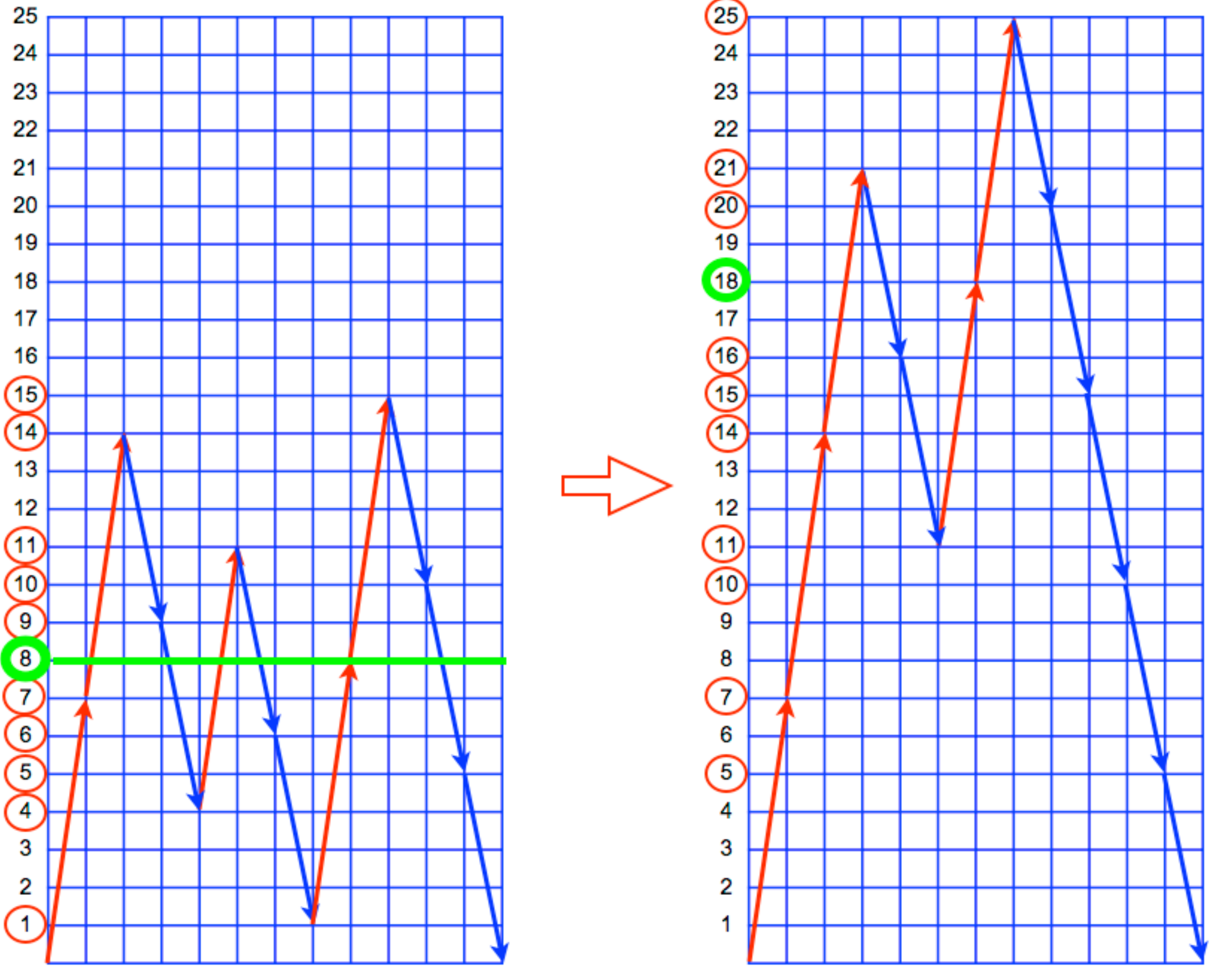}}
\caption{ Here we see that the rank of the last red arrow of $D$ is $18$.  We have   altogether $4$ red arrows and $2$ blue arrows of $\overline D$ that start below the green line, giving  $18=4\times 7 -
 2\times 5$. We count a total of $9$ red segments above the green line and $9$ blue segments below the green line. The zero-row-count property gives that these $9$ blue segments are precisely the
 needed $4\times 7 -
 2\times 5$ minus $9$.
}
\label{fig:BigPair}
}
\end{figure}

The desired rank is $bm-an$, when $\overline D$
has $b$ red arrows and $a$ blue arrows that start  below the green line. We interpret this number as the segment count of these arrows and apply the  zero-row-count property for rows below the green line. Indeed,  the count of red segments above the green line is certainly needed. All the
remaining portion of $bm-an$ are segments below the green line. On the other hand, we need to add some blue segments to have all segments below the green line to apply the zero-row-count property. These blue segments are exactly as described in the proposition.
See Figure \ref{fig:BigPair} for an example.
\end{proof}

This proposition leads to the basic formula we will use to compute the {area} of  the image $D$ by working on the preimage $\overline D$.

\begin{theorem}
  \label{t-1.1}
For any $(dm,dn)$-Dyck path $D$, with $(m,n)$ co-prime, we have
\begin{align}
\area (D) =  {1\over n}\Big(\sum_{j=1}^{dn} r(S_j(D)) \Big)-
d{n-1\over 2},
\label{e-1.1}
\end{align}
where ``$\, r(S_j(D))$'' denotes the rank of the $j^{th}$ South end of $D$.
\end{theorem}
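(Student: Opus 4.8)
The plan is to reduce \eqref{e-1.1} to a statement about the Dyck path $D$ by itself, since in fact neither side of \eqref{e-1.1} refers to the sweep map. Realize $D$ as a North--East lattice path from $(0,0)$ to $(dm,dn)$ that stays weakly above the main diagonal, and for $1\le j\le dn$ let $x_j$ be the number of East steps of $D$ preceding its $j^{th}$ North step. Since a Dyck path must begin with a North step, the $j^{th}$ South end of $D$ is the lattice point $(x_j,\,j-1)$, so
\begin{align}
  r(S_j(D)) \;=\; (j-1)m - x_j n .
  \label{e-plan-rank}
\end{align}
Summing \eqref{e-plan-rank}, using $\sum_{j=1}^{dn}(j-1)=\binom{dn}{2}$ and dividing by $n$, I get
\begin{align}
  \frac1n\sum_{j=1}^{dn} r(S_j(D)) \;=\; \frac{dm(dn-1)}{2}\;-\;\sum_{j=1}^{dn} x_j ,
  \label{e-plan-sum}
\end{align}
so the theorem becomes the purely geometric identity $\area(D)=\frac{dm(dn-1)}{2}-d\,\frac{n-1}{2}-\sum_{j=1}^{dn}x_j$.

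Next I would compute $\area(D)$ one row at a time. In row $i$ (the horizontal strip $i\le y\le i+1$, $0\le i\le dn-1$) the path $D$ has a single North step, at abscissa $x=x_{i+1}$, and the cells of that row lying weakly below $D$ are exactly the ones in columns $c\ge x_{i+1}$; such a cell lies (weakly) above the main diagonal exactly when its lower-right corner $(c+1,i)$ has nonnegative rank, i.e.\ when $c\le\frac{im}{n}-1$. The Dyck condition applied at the $(i+1)^{th}$ South end gives $im-x_{i+1}n=r(S_{i+1}(D))\ge 0$, hence $x_{i+1}\le\lfloor im/n\rfloor$, so row $i$ contributes precisely $\lfloor im/n\rfloor-x_{i+1}$ cells and
\begin{align}
  \area(D) \;=\; \sum_{i=0}^{dn-1}\Big\lfloor\frac{im}{n}\Big\rfloor\;-\;\sum_{j=1}^{dn} x_j .
  \label{e-plan-area}
\end{align}
Writing $i=kn+s$ with $0\le k\le d-1$ and $0\le s\le n-1$ splits the floor-sum as $\sum_{k=0}^{d-1}\bigl(nkm+\sum_{s=0}^{n-1}\lfloor sm/n\rfloor\bigr)$, and the classical evaluation $\sum_{s=0}^{n-1}\lfloor sm/n\rfloor=\frac{(m-1)(n-1)}{2}$ (valid because $(m,n)$ is co-prime) gives $\sum_{i=0}^{dn-1}\lfloor im/n\rfloor=\frac{mnd(d-1)}{2}+d\,\frac{(m-1)(n-1)}{2}$.

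Finally I would substitute this into \eqref{e-plan-area} and compare with \eqref{e-plan-sum}: the term $\sum_j x_j$ cancels, and after removing the common factor $\frac{d}{2}$ the theorem collapses to the one-line polynomial identity $mn(d-1)+(m-1)(n-1)=m(dn-1)-(n-1)$. The step deserving the most care is the row count behind \eqref{e-plan-area}: one must be precise about which cells of a row sit strictly between $D$ and the diagonal --- in particular, one should check that the lattice points the diagonal passes through, and any point where $D$ actually touches the diagonal, are handled correctly by the ``lower-right corner'' criterion --- and one must invoke the Dyck property exactly where it makes each row's contribution $\lfloor im/n\rfloor-x_{i+1}$ non-negative. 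If one prefers to bypass the floor-sum, the identity \eqref{e-plan-area} can instead be obtained by observing that the cells weakly below $D$ number $d^2mn-\sum_j x_j$, that the cells weakly below the diagonal number $\frac{d^2mn+dm+dn-d}{2}$ (by the $180^\circ$ rotational symmetry of the $dm\times dn$ rectangle), and that every cell of the second set belongs to the first.
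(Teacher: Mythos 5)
Your proof is correct and follows essentially the same route as the paper: both count area row by row, and your row contribution $\lfloor im/n\rfloor - x_{i+1}$ is exactly the paper's $(r(S_{i+1}(D))-lpr_i)/n$, while your evaluation of $\sum_{i=0}^{dn-1}\lfloor im/n\rfloor$ via $\sum_{s=0}^{n-1}\lfloor sm/n\rfloor=\frac{(m-1)(n-1)}{2}$ is the paper's summation of the least nonnegative ranks (the residues $mi \bmod n$) in coordinate form, followed by routine algebra.
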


\begin{proof}
Notice that if we define the rank  of the lattice cell whose South-West coordinates are $(i,j) $ by setting
$r(i,j)=mj-ni$, then the least positive rank in the $j^{th}$ row is none other than the remainder of $mj$ mod $n$. Since the residues  modulo $n$ are $0,1,\ldots , n-1$ and the least positive ranks are distinct for $0\le j\le n-1$, it follows that  their sum is $n(n-1)/2$. Summing over all $j$ gives $dn(n-1)/2$.
Calling $lpr_j$ the least nonnegative rank in row $j$,  it is evident that
$( r(S_j(D))-lpr_j)/n$ is the contribution of row $j$ to the {area}  of $D$.
This given, we see that \eqref{e-1.1} is simply obtained  by summing all these contributions.
\end{proof}

\section{Proof that  dinv  sweeps to  {area}}

Given a co-prime pair $(m,n)$ and $d$, our argument is to show that as we decrease the {area} of the preimage $\overline D$ by one unit, both dinv and area satisfy the same recursion.

We have depicted in Figure \ref{fig:RedBlueB} the cell that we have subtracted  from the preimage $\overline D$ to obtain
a preimage $\overline D'$ with one less unit of {area}.
This operation may be viewed as replacing a red arrow $S$ of $\overline D'$ by a dashed red arrow $S'$ and a blue arrow $W$ by   a dashed blue arrow $W'$. Calling $D$ and $D'$ the sweep map images of  $\overline D$
and  $\overline D'$, our task is to determine the difference  $\area(D)-\area(D')$. Our tool  will be formula \eqref{e-1.1} and the fact that
the starting rank of any arrow $A$ of $D$ is $bm -a n$ where
$\overline D$ has $b$  red arrows and $a$ blue arrows  that start
 below the preimage $ \overline A$ of $A$ in $\overline D$.
This difference will be calculated in $\overline D$ and
$\overline D'$ by means of our tool.  It should be mentioned that the following argument will be significantly simplified if we choose the starting rank of the displayed $W$ to be the largest in the sweep order.
Then some of the cases can not happen. In particular, the regions $T_1$ and $T_2$ can not have any segments.

Below we will talk about four lines of levels $l+n, l, k, k-n$ respectively. In the $d\ne 1$ case, we actually mean the lines of slope $\epsilon$ that passing through the corresponding four vertices of middle parallelogram. This is due to the modification of the Sweep lines in the non-coprime case.

\addpic[5cm]{{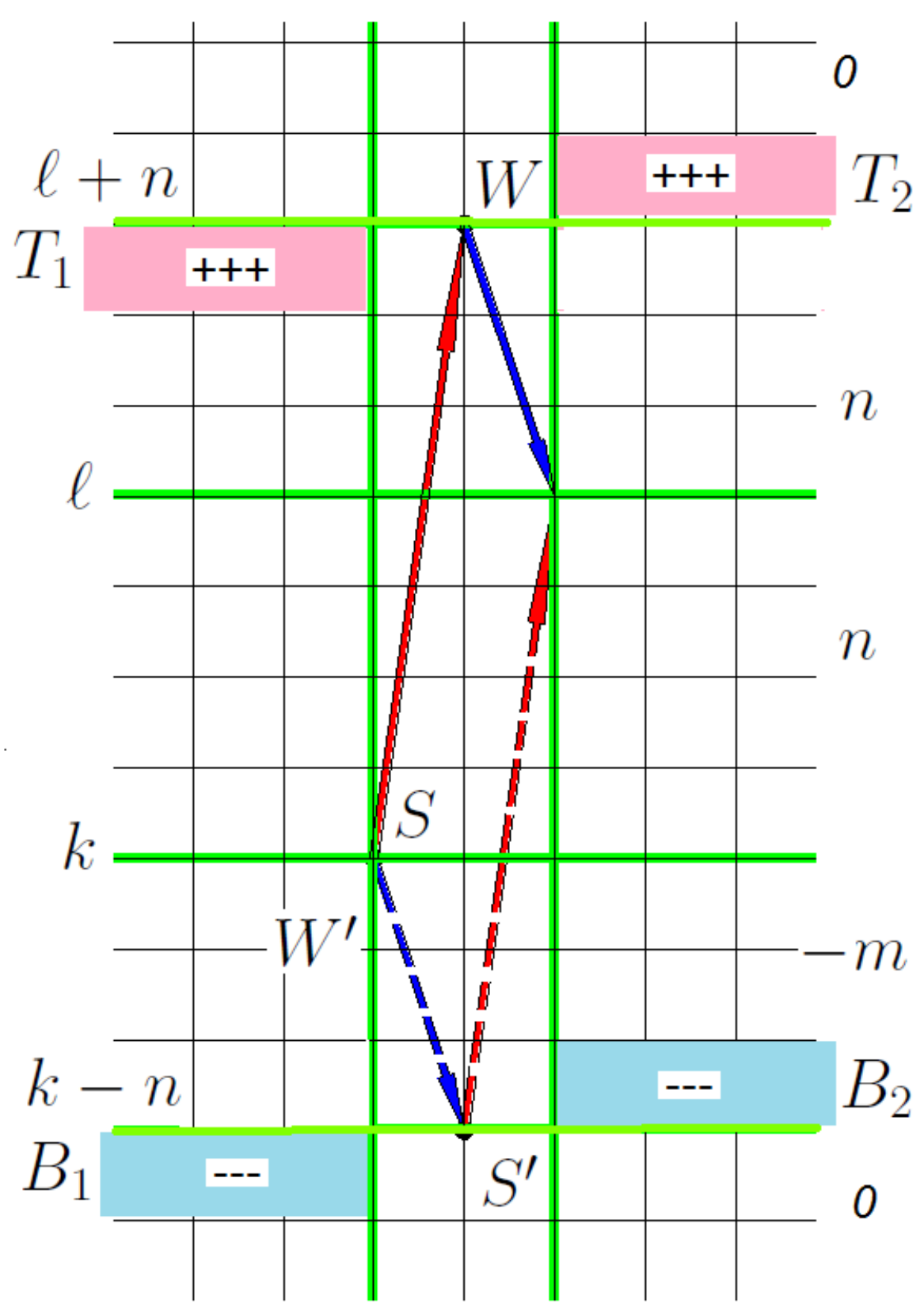}}{Contribution for the {area} difference. \label{fig:RedBlueB}}
Now there are $4$ distinct cases.   Firstly, red arrows that  start above level $\l+n$ (see Figure \ref{fig:RedBlueB}) or below level $k-n$ are not affected by the replacements.
 Thus their contribution to the difference is $0$. Secondly in the case of any  red arrow   starting strictly below level $l+n$ and strictly above level $k$, its contribution to the difference is $n$. The reason for this is that both red arrows in the display will increase the ranks of the arrows of $D$ and $D'$, whose starting levels are in this range by an equal amount, therefore they cancel each other. By contrast  the dashed blue arrow  will affect   the ranks of the red arrows of  $D'$ so that the  contribution of each  to the difference is $-(-n)$.

\noindent
Thirdly, notice that each  red arrow of $D$ or $D'$ that starts strictly below rank $k$  and  strictly above rank $k-n$, is not affected by either of the  blue arrows. But
each  of the red arrows of $D'$    is affected by the dashed   red arrow and thus contributes a $-m$ to the difference.

Finally we must include the contribution to the {areas} of $D$ and $D'$ by the ranks of red arrows in the display itself. We claim  that
\begin{multline}
  rank(S)-rank(S')\\
=m\times \#\{
\hbox{
red arrows that   start (strictly) below level $k$ and above level $k-n$
}
\}\\
-n\times \#\{
\hbox{
blue arrows that   start (strictly) below level $k$ and   above level $k-n$
}
\}.
\label{e-1.2}
\end{multline}

The reason is that the  arrows that start below
level $k-n$ contribute equally to the {areas} of $S$ and $S'$. Thus they cancel in computing the  desired difference. On the other hand all the arrows accounted for in \eqref{e-1.2} do contribute to the {area} of $S$ but not to the {area} of $S'$.

Notice first that the contribution to the {area} difference
in the third case is the negative of the first part of the contribution obtained in \eqref{e-1.2}.
After cancellation, all the remaining contributions  are multiples of $n$. (A convenient fact  since, according to \eqref{e-1.1}, $n$ has to be divided out.)

Furthermore, in the second case this multiple counts the number of red arrows that have a red segment
in $T_1$ or $T_2$. Finally, we see in \eqref{e-1.2} that the factor of $n$ that survives the cancellation, counts exactly the  arrows that have a blue segment in $B_1$ or $B_2$.

These observations imply the following result.

\begin{proposition}\label{p-2.1}
Let $\overline{D'}$ be obtained from $\overline D$
by removing an area cell and let $D$ and $D'$ be their sweep map images. Let $B_1$ and $B_2$ be the blue regions and $T_1$ and $T_2$ be the red regions\;\footnote{For black-white print, we add ``$+++$" to red regions and ``$---$" to blue regions.}
in Figure \ref{fig:RedBlueB}. Then
\begin{align}
  \area(D)-\area(D')=c^r(T_1)+c^r(T_2)-c^b(B_1)- c^b(B_2),
\label{e-1.3}
\end{align}
where $c^r(T_1),c^r(T_2)$ denote red  segment counts
and $c^b(B_1), c^b(B_2)$ denote blue  segment counts
in the corresponding regions.
\end{proposition}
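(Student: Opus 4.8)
The strategy is to combine the area formula~\eqref{e-1.1} of Theorem~\ref{t-1.1}, the rank description of Proposition~\ref{p-1.1}, and the local geometry of Figure~\ref{fig:RedBlueB}; in effect \eqref{e-1.3} is the bookkeeping summary of the case analysis carried out in the paragraphs preceding the statement. I would first apply~\eqref{e-1.1} to both $D$ and $D'$. The additive constant $d(n-1)/2$ is the same for the two paths, so
\begin{align*}
\area(D)-\area(D')=\frac1n\sum_{A}\bigl(r_{\overline D}(A)-r_{\overline D'}(A)\bigr),
\end{align*}
where $A$ ranges over the $dn$ red arrows of the preimage and, by Proposition~\ref{p-1.1}, the image rank of $A$ is $b_A m-a_A n$, with $b_A$ (resp.\ $a_A$) the number of red (resp.\ blue) arrows of the preimage that start below $\overline A$. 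Since $\overline D$ and $\overline{D'}$ differ only by the substitution of the red arrow $S$ by $S'$ and of the blue arrow $W$ by $W'$ shown in Figure~\ref{fig:RedBlueB}, for each $A$ the difference $r_{\overline D}(A)-r_{\overline D'}(A)$ depends only on how these two substitutions affect $a_A$ and $b_A$, hence only on the position of the starting vertex of $A$ relative to the four lines at levels $l+n,\,l,\,k,\,k-n$ (read, when $d\ne 1$, as the slope-$\epsilon$ lines through the corresponding corners of the middle parallelogram).

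I would then run the four-case split as in those paragraphs. If $A$ starts above level $l+n$ or below level $k-n$, neither substitution touches $a_A$ or $b_A$, so the contribution is $0$. If $A$ starts strictly below $l+n$ and strictly above $k$, then $S$ lies below $\overline A$ in $\overline D$ exactly when $S'$ lies below $\overline A$ in $\overline{D'}$, so these cancel from the difference, whereas $W'$ lies below $\overline A$ in $\overline{D'}$ while $W$ does not lie below $\overline A$ in $\overline D$, so $a_A$ is one larger for $\overline{D'}$ and the contribution is $-(-n)=n$. If $A$ starts strictly below $k$ and strictly above $k-n$, neither $W$ nor $W'$ lies below $\overline A$, but $S'$ lies below $\overline A$ in $\overline{D'}$ while $S$ did not in $\overline D$, so $b_A$ is one larger for $\overline{D'}$ and the contribution is $-m$. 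Finally, the ranks of the display arrows $S$ (in $\overline D$) and $S'$ (in $\overline{D'}$) must themselves be included, and their difference is the right-hand side of~\eqref{e-1.2}.

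Now I sum. Let $N_1,N_2$ be the numbers of red arrows of the preimage starting strictly between levels $k-n$ and $k$, respectively strictly between levels $k$ and $l+n$, and let $N_3$ be the number of blue arrows starting strictly between levels $k-n$ and $k$. The second case contributes $nN_2$, the third contributes $-mN_1$, and \eqref{e-1.2} contributes $mN_1-nN_3$; adding these, the terms $\pm mN_1$ cancel and $\sum_A\bigl(r_{\overline D}(A)-r_{\overline D'}(A)\bigr)=n(N_2-N_3)$. Dividing by $n$ as prescribed by~\eqref{e-1.1},
\begin{align*}
\area(D)-\area(D')=N_2-N_3 .
\end{align*}
It then remains to identify $N_2=c^r(T_1)+c^r(T_2)$ and $N_3=c^b(B_1)+c^b(B_2)$: reading off Figure~\ref{fig:RedBlueB}, a red arrow of the preimage starts strictly between $k$ and $l+n$ precisely when it has a red segment in $T_1\cup T_2$ (and then exactly one such segment), while a blue arrow starts strictly between $k-n$ and $k$ precisely when it has a blue segment in $B_1\cup B_2$ (again exactly one). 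This gives~\eqref{e-1.3}.

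The main obstacle is exactly this last identification: matching ``red arrows starting strictly between $k$ and $l+n$'' with ``red arrows having a red segment in $T_1\cup T_2$'', and the analogous statement for blue arrows and $B_1\cup B_2$. This requires a careful look at how the two pairs of parallel sides of the middle parallelogram --- which carry the slopes of the red arrows and of the blue arrows --- cut across a generic arrow of the preimage, together with care over the strict versus non-strict inequalities at the four boundary levels and over the ``sweep the right starting point first'' convention when $d>1$. I would also confirm that the normalization used for convenience in the text (taking the starting rank of $W$ largest in the sweep order, so that $T_1$ and $T_2$ carry no segments) is not actually needed for the statement, and check that the four cases partition the $dn$ red arrows of the preimage without overlap.
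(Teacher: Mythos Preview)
Your proposal is correct and follows essentially the same route as the paper: you invoke Theorem~\ref{t-1.1} to reduce the area difference to a sum of rank differences, run the same four-case split on the starting level of a red arrow relative to $l+n,\,k,\,k-n$, obtain the same cancellation of the $\pm m N_1$ terms against the first summand of~\eqref{e-1.2}, and finish with the same geometric identification of the surviving counts with the segment counts in $T_1\cup T_2$ and $B_1\cup B_2$. One small remark: you cite Proposition~\ref{p-1.1} for the formula $r(A)=b_A m-a_A n$, but that identity is the elementary count stated directly in the text (and in the proof of Proposition~\ref{p-1.1}); Proposition~\ref{p-1.1} itself gives the alternative segment-count description and, as the paper notes in Section~3, is not actually used in this argument.
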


To obtain the recursion satisfied by   dinv
we will make use of the following remarkable fact. By abuse of notation, we will use $W_i$ (resp. $S_j$) for the $i$-th blue ($j$-th red) arrow.

\begin{proposition}
  \label{p-2.2}
The dinv statistic of a rational  Dyck path $D$ (given in our stretched  form) may simply be obtained by counting  the pairs $(W_i\to S_j)$ consisting of a blue
 arrow to the left of a red arrow, such that   $W_i$ sweeps $S_j$, i.e., $W_i$ intersects $S_j$ when we move it along a line of slope $\epsilon$ (with $0<\epsilon \ll 1$)
 to the right   past $S_j$.
\end{proposition}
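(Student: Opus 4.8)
The plan is to translate the definition of $\dinv$ given in the introduction — a cell $c$ above $\overline D$ counts when the rank $a$ of the East step below $c$ and the rank $b$ of the North step right of $c$ satisfy $0 \le a - b < m+n$ — into the stretched picture, and then recognize the resulting condition as exactly a ``sweep'' incidence between a blue arrow and a red arrow of $D$. First I would recall that the image path $D$ consists of the same multiset of arrows as $\overline D$, reordered by starting rank (with the $\epsilon$-perturbation breaking ties in the non-coprime case), so that a blue arrow $W_i$ lies to the left of a red arrow $S_j$ in $D$ precisely when $r(W_i) < r(S_j)$ in the sweep order — i.e., $\text{rank}(W_i) < \text{rank}(S_j)$, or they are tied and $W_i$ sweeps first. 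Since a blue arrow drops by $n$ and a red arrow rises by $m$, the geometric event ``$W_i$ intersects $S_j$ as we slide the slope-$\epsilon$ line rightward past $S_j$'' happens exactly when the half-open level interval swept out by $W_i$ (from its start rank down $n$ units, or the appropriate $\epsilon$-tilted version) meets the start rank of $S_j$; I would make this precise as an inequality on start ranks and check it is symmetric to the $\dinv$ inequality above.

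The key steps, in order: (1) Fix the bijection between cells above $\overline D$ contributing to $\dinv$ and pairs (East step, North step) bounding such a cell from below and from the right; this is already set up in the introduction, so I would just restate it with ranks $a = \text{rank}(W)$, $b = \text{rank}(S)$ and the condition $0 \le a - b < m+n$. (2) Observe that under the sweep map the multiset of arrows is preserved, so this same pair of arrows appears in $D$; I would then show the condition $b \le a < b + m + n$ is equivalent to the statement that in the sweep order $S$ comes strictly after $W$ but ``not too far after'' — precisely, that the slope-$\epsilon$ line, as it passes the foot of $S$, is still cutting the body of the (tilted) blue arrow $W$. Here the arithmetic is: $W$ occupies start-rank levels in $(a-n, a]$ after tilting (roughly), $S$ occupies $(b, b+m]$, and ``$W$ sweeps $S$'' means the start level of $S$, namely $b$ (or $b+\epsilon\cdot(\text{column})$), lies in the range where $W$'s segment is still present, which unwinds to exactly $b \le a < b+m+n$. (3) Conversely, every pair (blue left of red, blue sweeps red) in $D$ pulls back to a pair of arrows in $\overline D$ bounding a unique cell above $\overline D$, and the same inequality shows that cell contributes to $\dinv$. (4) Check the ``to the left'' clause matches the ``below/right of $c$'' geometry — a blue (East) step below the cell and a red (North) step to its right — so the correspondence is a genuine bijection, not just an equality of counts. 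I would lean on the zero-row-count property and Proposition~\ref{p-1.1} to handle the $d \ne 1$ bookkeeping cleanly, since there the sweep lines are the $\epsilon$-tilted lines and ties among ranks must be resolved right-to-left.

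The main obstacle I expect is the tie-breaking in the non-coprime case: when several arrows share a start rank, ``$W_i$ to the left of $S_j$ and $W_i$ sweeps $S_j$'' must be matched exactly against the $\epsilon$-perturbed rank order, and one has to verify that the $\epsilon$-tilted slope-$\epsilon$ line produces the same right-to-left resolution that the definition of $D$ uses, with no off-by-one error at the boundary case $a - b = 0$ versus $a-b = m+n-1$. Concretely, I would check the two boundary cases by hand: $a = b$ (equal ranks, so the column positions decide, and one checks $W_i$ left of $S_j$ forces $W_i$ to be swept just before $S_j$ — contributing) and $a = b + m + n - 1$ (the last rank at which the blue segment is still alive as the line crosses $S_j$'s foot — still contributing), while $a = b + m + n$ fails. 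The interior cases are then immediate from monotonicity. A secondary, lighter obstacle is simply being careful that the pair is recorded as $(W_i \to S_j)$ with $W$ genuinely to the left: since a contributing cell has its East step strictly below and its North step strictly to the right, in $\overline D$ the East step starts at a smaller $x$-coordinate, and one checks the sweep-map reindexing does not reverse this.
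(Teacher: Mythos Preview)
You have misread the statement. Proposition~\ref{p-2.2} concerns a \emph{single} path: draw a Dyck path $D$ in stretched form and count, \emph{in that same drawing}, the pairs (blue arrow left of red arrow, blue sweeps red). In the paper's application it is invoked with $D=\overline D$; no sweep map is involved at all. You instead set up two paths, taking $D$ to be the sweep image of $\overline D$, and try to match dinv cells of $\overline D$ with blue--red pairs in the image $D$. That reinterpretation breaks immediately at your ``to the left'' check: in the sweep image the arrows are ordered by increasing starting rank, so ``$W_i$ to the left of $S_j$ in $D$'' forces $a<b$ (your own first paragraph says this), whereas the dinv inequality $0\le a-b<m+n$ forces $a\ge b$. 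Your step~(4) therefore cannot produce the claimed bijection.

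Stripped of the sweep-image detour, your step~(2) is exactly the paper's argument, which is two lines. With $a=\operatorname{rank}(W_i)$ and $b=\operatorname{rank}(S_j)$, the dinv condition $0\le a-b<m+n$ rewrites as $a\ge b$ together with $a-n<b+m$: the first says the foot of $W_i$ lies at or above the foot of $S_j$, the second says the tip of $W_i$ lies strictly below the tip of $S_j$, and these two are precisely ``$W_i$ sweeps $S_j$'' when slid along a slope-$\epsilon$ line. The ``to the left'' clause is automatic in the \emph{same} path because the East step bounding a cell from below always precedes (in the step sequence) the North step bounding it on the right. No appeal to Proposition~\ref{p-1.1} or the zero-row-count property is needed, and the $d\ne1$ tie-breaking is handled by the $\epsilon$ in ``sweeps'', not by tracking columns in a second path.
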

\begin{proof}
Suppose that $W_i$ has starting rank $a$ and $S_j$ has starting rank $b$. By the quoted result of Loehr-Warrington, the pair $(W_i,S_j)$ contribute a unit to
$\dinv(D)$ if and only if $0\le a-b<m+n$. This is equivalent to requiring that  $a\ge b$ and $a-n< b+m$ . But these two inequalities are  precisely what is needed to guarantee that $W_i$ sweeps $S_j$.
\end{proof}

This given, our dinv recursion can be stated as follows.

\begin{proposition}\label{p-2.3}
Let $\overline D'$ be obtained from $\overline D$
by removing an area cell.  Let $B_1$ and $T_1$ be the blue regions and $B_2$ and $T_2$ be the red regions
in Figure \ref{fig:RedBlueA}. Then
\begin{align}
\dinv(\overline D)-\dinv(\overline{D'})=c^b(T_1)+c^r(T_2)-c^b(B_1)- c^r(B_2)-1,
\label{e-1.4}
\end{align}
where $c^b(T_1),c^b(B_1)$ denote blue  segment counts
and $c^r(T_2), c^r(B_2)$ denote red  segment counts
in the corresponding regions.
\end{proposition}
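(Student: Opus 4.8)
The plan is to mimic, for $\dinv$, exactly the bookkeeping that produced the $\area$ recursion \eqref{e-1.3}, but now using Proposition \ref{p-2.2} as the combinatorial model for $\dinv$. By Proposition \ref{p-2.2}, $\dinv(\overline D)$ counts the sweeping pairs $(W_i\to S_j)$ — a blue arrow to the left of a red arrow such that $W_i$ sweeps $S_j$. Passing from $\overline D$ to $\overline D'$ replaces a red arrow $S$ (starting at level $l$, say) by the dashed $S'$ (shifted down by one column/one unit of area, so its starting rank drops in a controlled way) and a blue arrow $W$ by the dashed $W'$. So I must track how the set of sweeping pairs changes under these two local replacements. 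I would organize this exactly as in the $\area$ argument: pairs involving neither $S$/$S'$ nor $W$/$W'$ are unaffected; pairs of the form $(W_i\to S)$ vs. $(W_i\to S')$ for the other blue arrows $W_i$; pairs $(W\to S_j)$ vs. $(W'\to S_j)$ for the other red arrows $S_j$; and finally the pair $(W\to S)$ itself, which — with the stated convention that $W$ has the largest starting rank in the sweep order — accounts for the lone $-1$ on the right of \eqref{e-1.4}.

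The key technical point is the ``sweeps'' criterion from Proposition \ref{p-2.2}: $W_i$ (rank $a$) sweeps $S_j$ (rank $b$) iff $b\le a$ and $a-n<b+m$, i.e. iff the starting level of $W_i$ lies in the half-open window $[\,\text{level of }S_j,\ \text{level of }S_j+(m+n)\,)$, equivalently the starting vertex of $S_j$ lies in the window $(\,a-m,\,a+n\,]$ read off from $W_i$. Changing $S$ to $S'$ moves the start of the red arrow down; the blue arrows $W_i$ whose status as ``sweeping $S$'' flips are precisely those whose starting level sits in the thin band swept across, and a moment's inspection of Figure \ref{fig:RedBlueA} identifies this band as the red region $T_2$ (gain) together with the blue region $B_2$ (loss) — the count of such $W_i$ is $c^b(T_1)$-side vs. $c^r(B_2)$-side once one sorts out which arrows the relevant regions actually contain. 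Symmetrically, changing $W$ to $W'$ moves the start of the blue arrow, and the red arrows $S_j$ whose status as ``swept by $W$'' flips are those in the complementary band, contributing $+c^b(T_1)$ and $-c^r(B_2)$ — I will need to be careful to match the labelling of $T_1,T_2,B_1,B_2$ in Figure \ref{fig:RedBlueA} (which, note, is a \emph{different} figure from Figure \ref{fig:RedBlueB} used for $\area$) with the four bands that appear. The ``$+m$ and $-n$'' jumps in rank translate into ``count of red segments'' and ``count of blue segments'' in a region via exactly the zero-row-count/segment-counting device of Proposition \ref{p-1.1}, so each $\pm$ term on the right of \eqref{e-1.4} comes out as a segment count rather than an arrow count.

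The main obstacle — and the place where the proof will need genuine care rather than routine bookkeeping — is the \emph{boundary behaviour} of the half-open sweeping windows at the four critical levels ($l+n$, $l$, $k$, $k-n$ in the notation of the $\area$ discussion) and, in the non-coprime case $d\ne 1$, the fact that ``level'' must be read as ``position along a line of slope $\epsilon$ through the appropriate vertex of the middle parallelogram.'' Arrows starting exactly at one of these levels could be counted by one of $\overline D,\overline D'$ but not the other, and whether they are depends on the tie-breaking rule (sweep the rightmost starting point first). The convention that $W$ is chosen to have the \emph{largest} starting rank in the sweep order is precisely what kills the awkward cases: it forces $T_1$ and $T_2$ to contain no segments of certain types (as already noted before Proposition \ref{p-2.1}), collapses several sub-cases, and isolates the pair $(W\to S)$ as the unique ``extra'' sweeping pair that is present in $\overline D$ but destroyed in $\overline D'$, yielding the $-1$. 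So the write-up will: (i) invoke Proposition \ref{p-2.2} to reduce to counting sweeping pairs; (ii) split into the four classes of pairs above, exactly paralleling the four cases in the $\area$ computation; (iii) for each class, use the sweeping inequalities plus Proposition \ref{p-1.1}'s segment-counting to rewrite the net change as the appropriate $c^b(\cdot)$ or $c^r(\cdot)$ over $T_1,B_1,T_2,B_2$; (iv) check the four boundary levels using the largest-rank choice of $W$ to dispose of degenerate cases and extract the $-1$; and (v) remark that the slope-$\epsilon$ reading of the lines makes the $d\ne1$ case identical to the $d=1$ case. Comparing the resulting identity \eqref{e-1.4} with \eqref{e-1.3} then shows $\dinv(\overline D)-\area(D)$ is invariant under removing an area cell, which (after checking the base case where $\overline D$ is the maximal path) is the theorem.
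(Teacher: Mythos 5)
Your strategy coincides with the paper's: invoke Proposition \ref{p-2.2} to model $\dinv$ by sweeping pairs, split the pairs into four classes according to whether the blue and/or the red member is one of the displayed arrows, and convert the flipped pairs into segment counts over $T_1,T_2,B_1,B_2$. But two steps, as written, are wrong or left unresolved. First, your account of the $-1$ is reversed. In $\overline D$ the solid arrows occur in the order $S$ then $W$ (the removable cell sits at a North--East corner of the path), so $(W,S)$ is not even a pair of the required form ``blue arrow to the left of a red arrow'' and contributes nothing to $\dinv(\overline D)$; in $\overline{D'}$ the dashed $W'$ stands to the left of $S'$ with starting ranks differing by $n$, so $W'$ does sweep $S'$. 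Hence the displayed pair contributes $0-1=-1$ to $\dinv(\overline D)-\dinv(\overline{D'})$; a sweeping pair ``present in $\overline D$ but destroyed in $\overline{D'}$,'' as you put it, would contribute $+1$ and contradict \eqref{e-1.4}. Second, your matching of the four regions to the two flip classes is internally inconsistent: you assign $+c^b(T_1)$ and $-c^r(B_2)$ to both classes, which double-counts those two terms and never produces $c^r(T_2)$ or $c^b(B_1)$. The correct assignment, which the paper reads off directly from the sweeping inequalities, is: replacing $W$ by $W'$ flips exactly those red arrows $S_j$ having a red segment in $T_2$ (gain) or in $B_2$ (loss), while replacing $S$ by $S'$ flips exactly those blue arrows $W_i$ having a blue segment in $T_1$ (gain) or in $B_1$ (loss).

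A smaller point: you lean on the convention that the displayed $W$ has the largest starting rank in the sweep order in order to ``kill the awkward cases'' and to extract the $-1$. The proposition, however, concerns the removal of an arbitrary area cell, and the paper's proof uses no such convention---membership of a segment in one of the four regions already decides every boundary situation, with the slope-$\epsilon$ reading of the level lines handling ties when $d\ne 1$. Invoking that convention would establish the recursion only for one specially chosen cell (enough for the main theorem, but not for the statement as given). Likewise Proposition \ref{p-1.1} is not needed here: the flips are detected directly from the sweeping inequalities rather than from the rank formula.
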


\begin{proof}
We will use  the visual fact
 $
\dinv(\overline D)=\#\{(W_i\rightarrow S_j): \hbox{$W_i$ sweeps $S_j$}\}.
$

\addpic[5cm]{{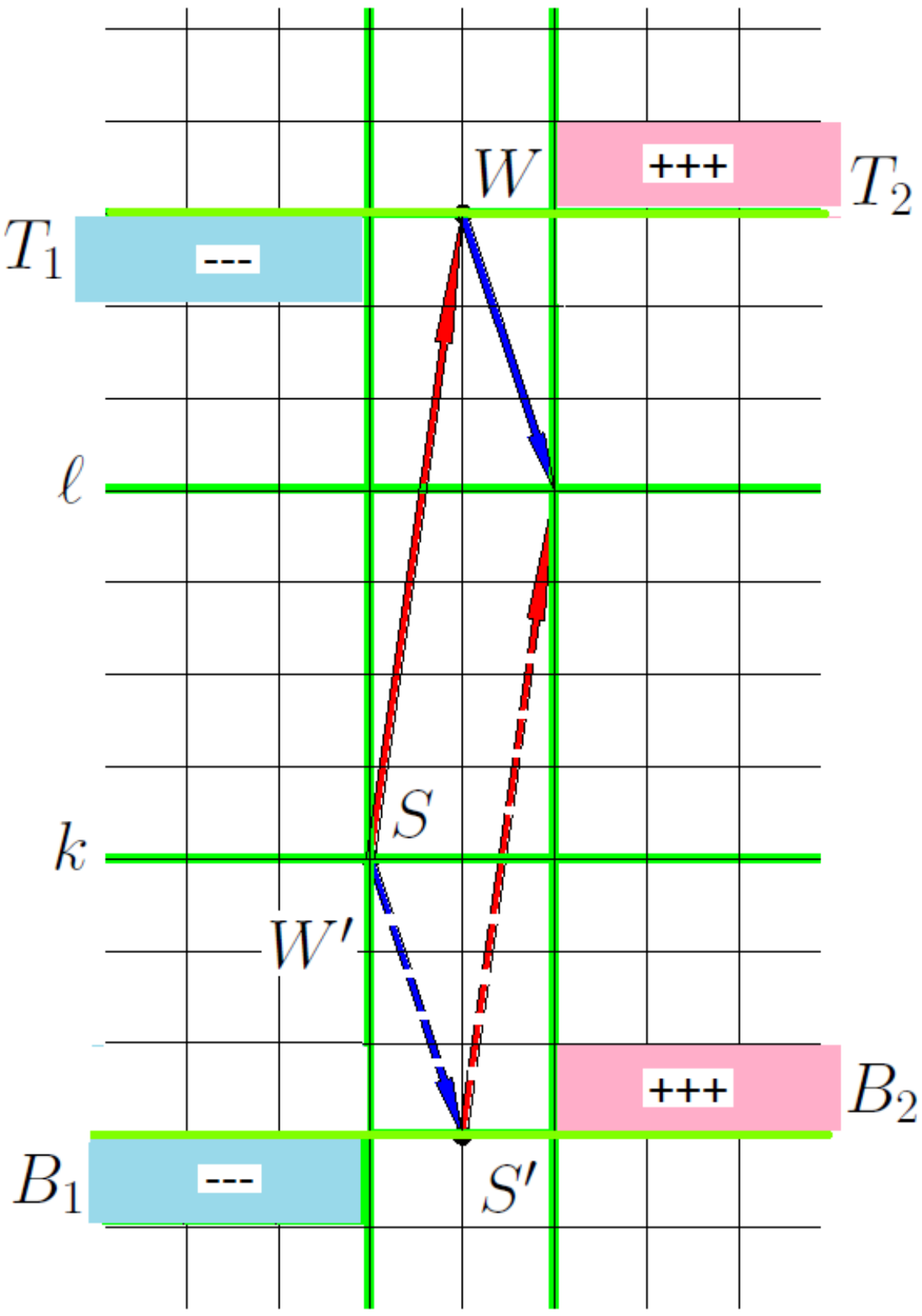}}{Contribution for the dinv difference. \label{fig:RedBlueA}}

Since $\overline D'$ is obtained from $\overline D$ by replacing the solid arrows $S,W$ by dashed arrows $S',W'$ (see Figure \ref{fig:RedBlueA}), we can divide the contribution of a pair  $(W_i\rightarrow S_j)$ to  the difference $\dinv(\overline D)-\dinv(\overline{D'})$ into four  cases.

\item {(1)} Both $W_i$ and $S_j$ are not the displayed arrows. The contribution in  this case is always $0$.

\item {(2)} Both $W_i$ and $S_j$ are in the displayed arrows. This can  only happen in two ways:  $(W,S)$ in $\overline D$ (no dinv) or  $(W',S')$ in $\overline D'$ ($1$  dinv). Therefore the
contribution to the difference in this case is $-1$.

\item {(3)} Only $W_i$ is one of  the displayed arrows. Then we need to consider
 the pairs $(W,S_j)$ in $\overline D$ and
$(W',S_j)$ in $\overline D'$. Their contribution to
the difference is $1$ if $S_j$ has a red segment in $T_2$, $-1$ if $S_j$ has a red segment in $B_2$ and $0$ if $S_j$ does not have a segment in neither $T_2$ or $B_2$.

\item {(4)} Only $S_j$ is in the displayed arrows. Then we need to consider  the pairs $(W_i,S )$ in $\overline D$ and  $(W_i,S')$ in $\overline D'$. Their contribution to the difference is $1$ if $W_i$ has a blue segment in $T_1$, $-1$ if $W_i$ has a blue segment in $B_1$ and $0$ if $W_i$ does not have a segment in neither $T_1$ or $B_1$.

\noindent
This proves the identity in \eqref{e-1.4}.
\end{proof}

\begin{proof}[Proof that dinv sweeps to area]
We first show that  the recursions in
\eqref{e-1.3} and \eqref{e-1.4} are identical. To do this it is sufficient to observe that
$c^r(T_1)=c^b(T_1)$  and  $c^r(B_2)+1=c^b(B_2)$. The reason for this is the alternating colors of segments in each row that always begin with a red segment and end with a blue segment.

Thus it is sufficient to verify the base case where $\area(\overline D)=0$. The only $(dm,dn)$-Dyck path with area $0$ is the path $\overline  D$ that remains  as close as possible to the main diagonal.  Thus the ranks of the South ends of such a path are a rearrangement of $0,1,2,\ldots,n-1$, with each appearing $d$ times.
This forces the image $D$ of  $\overline  D$ to start with $dn$ North steps  and end with $dm$ East steps.
 This is the path of maximum area. It remains to prove that $\overline  D$ has maximum dinv, or equivalently   every cell above $\overline  D$ contributes  to its dinv. By contradiction, suppose that for a pair $(W_i\to S_j)$,  the $i$-th blue arrow $W_i$ does not sweep the $j$-th red arrow $S_j$. We have the following two cases. See Figure \ref{fig:BaseCase}.

{(1)}  $S_j$ starts at a level  above $W_i$. Assume  we have
$S_j S_{j+1}\cdots S_{j+r} W$  for some $r\ge 0$. Then consider the path obtained from $\overline D$ by changing this to
$S_j S_{j+1}\cdots W S_{j+r} $. This is a new Dyck path with area one less than that of $\overline D$. A contradiction!

{(2)}  $S_j$ ends  below the level  of $W_i$, then assume we have $SW_{i-r} \cdots W_{i-1}W_i$ for some $r\ge 0$. Then consider the path obtained from $\overline D$ by changing this to $W_{i-r}S \cdots W_{i-1}W_i$. This is a new Dyck path with area one less than that of $\overline D$. A contradiction!

\begin{figure}[!ht]
\centering{
\mbox{\includegraphics[height=2 in]{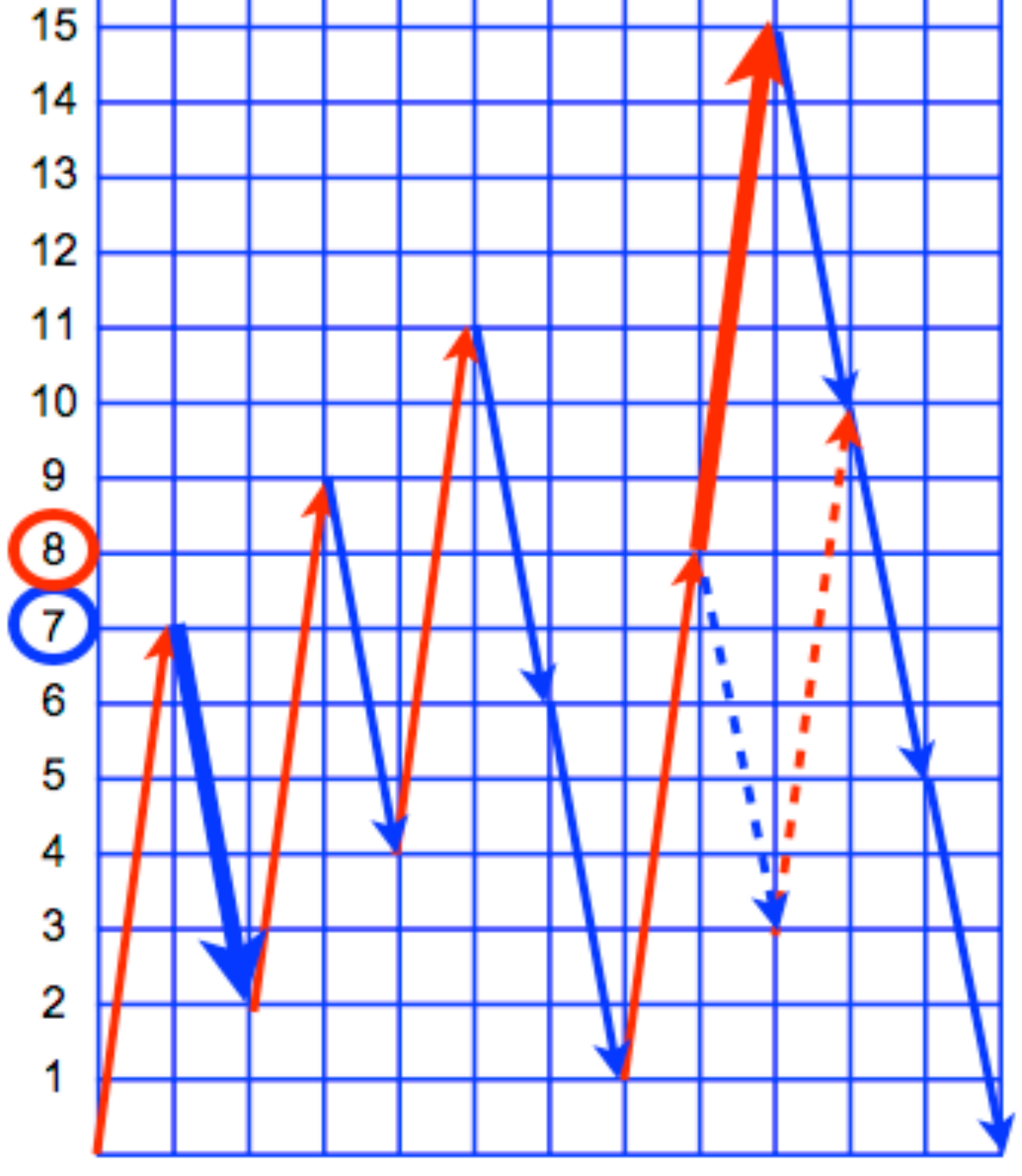}
\hspace{10mm}
\includegraphics[height=2 in]{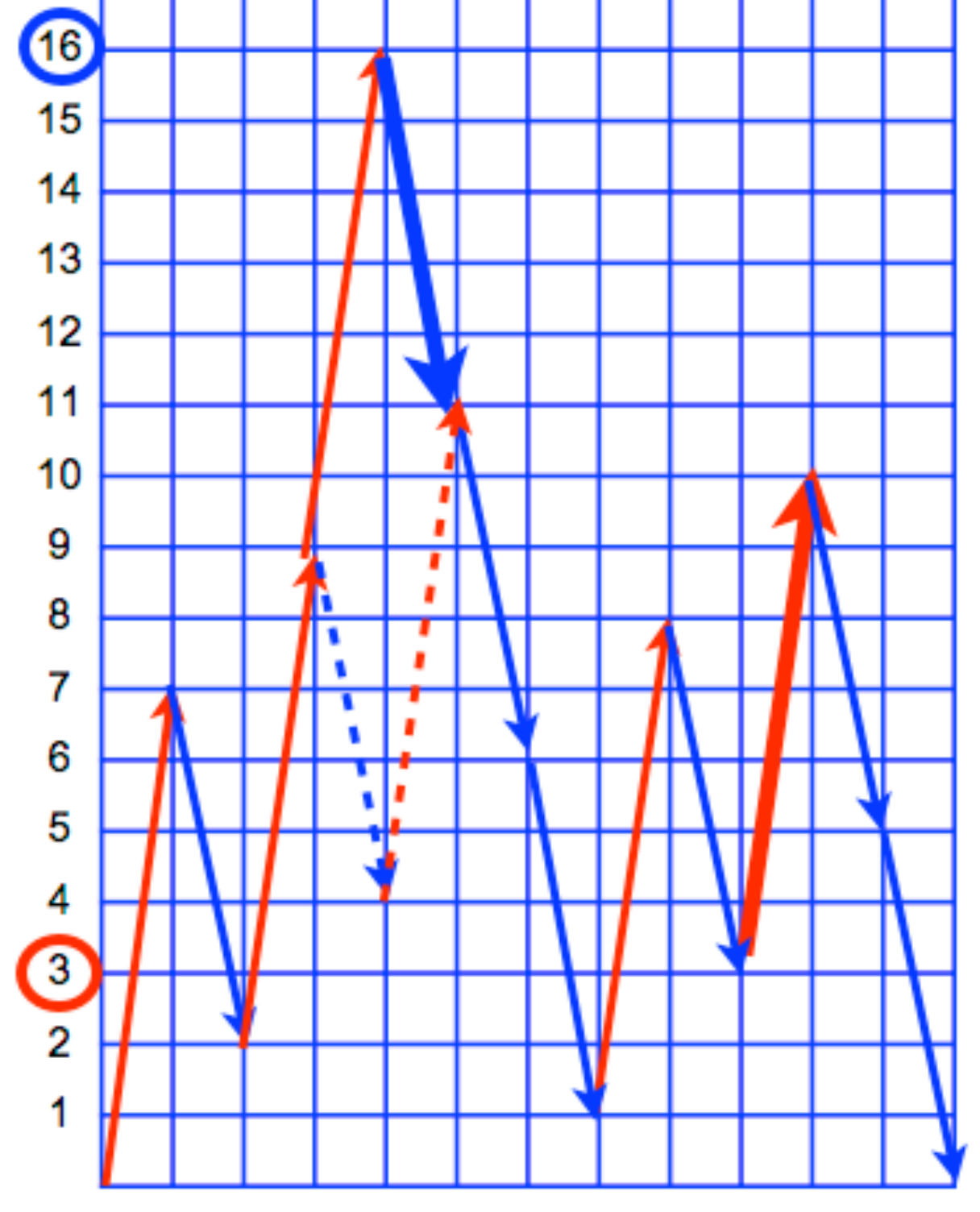}
}
\caption{Two examples of                                                                                                                                                                                                                                                                                                our proof that the dinv of the area zero $(m,n)$-Dyck path is $\frac{(dm-1)(dn-1)+d-1}{2}$, where the blue arrow $W_i$ and red arrow $S_j$ are thickened.}
\label{fig:BaseCase}
}
\end{figure}

This completes our proof that dinv sweeps to area.
\end{proof}

\section{Remarks}

We terminate our presentation by a few comments. To begin we should note that our argument does not use Proposition \ref{p-1.1}. We have nevertheless included it in this writing for two reasons.
Firstly because it is too surprising a result to leave out, but more importantly, because it gives a simple proof of the nontrivial result that the sweep image of a Dyck path is also a Dyck path. The reason for this is that it is implicit in the conclusion of the Proposition that  the starting ranks of all the arrows of the image are non-negative.
The latter is the only property needed to guarantee that the image of a $(dm,dn)$-Dyck path is  a $(dm,dn)$-Dyck path.

There are three proof of the ``dinv sweeps to area" result as we said in Section 1.
The first two proof only deal with the co-prime case. This case simplifies a lot since all starting ranks of the steps are distinct. Gregory Warrington told us their proof in \cite{ref1} can be extended for the non-coprime case. Our first draft of this paper also deal with the co-prime case, but explained how to (naturally) extend our approach to the non-coprime case. After we put this draft on the arXiv, Mazin told us immediately that the non-coprime case is  \cite[Corollary 1]{Mazin dinv sweep} after some translation of terminology. This write up is modified (suggested by the referee) to fit the non-coprime case.


\end{document}